\documentclass[12pt]{amsart}

\usepackage{amsmath}
\usepackage{amssymb}
\usepackage{amsfonts}
\usepackage{amsthm}
\usepackage{enumerate}
\usepackage{hyperref}
\usepackage{paralist}
\usepackage{color}
\usepackage{thmtools}
\usepackage{changepage}
\usepackage{cleveref}
\usepackage{graphicx}
\numberwithin{equation}{section}

\newcommand{\NN}{\mathbb{N}}
\newcommand{\set}[1]{\{#1\}}
\newcommand{\with}{\,|\,}
\newcommand{\ZZ}{\mathbb{Z}}

\DeclareMathOperator{\sdepth}{sdepth}
\DeclareMathOperator{\hdepth}{hdepth}
\DeclareMathOperator{\depth}{depth}

\DeclareMathOperator{\ind}{ind}
\newcommand{\wnu}{\widetilde{\nu}}
\newcommand{\walpha}{\widetilde{\alpha}}
\newcommand{\wpsi}{\widetilde{\psi}}
\textheight=600pt
\textwidth=435pt
\oddsidemargin=17pt
\evensidemargin=17pt

\theoremstyle{plain}
\newtheorem{thm}{Theorem}[section]

\newtheorem{prop}[thm]{Proposition}
\newtheorem{lem}[thm]{Lemma}

\theoremstyle{definition}
\newtheorem{dfn}[thm]{Definition}

\newtheorem{exmp}[thm]{Example}

\newtheorem{rem}[thm]{Remark}

\newtheorem{dfns-rems}[thm]{Definitions and Remarks}
\newtheorem{notas-rems}[thm]{Notations and Remarks}
\newtheorem{exmps-rems}[thm]{Examples and Remarks}

\newlength\Thmindent
\setlength\Thmindent{20pt}

\begin{document}

\title[The Stanley depth in the Koszul Complex]{The Stanley depth in the upper half of the Koszul Complex}

\author{Lukas Katth\"an}

\address{Lukas Katth\"an, Universit\"at Osnabr\"uck, FB Mathematik/Informatik, 49069 Osnabr\"uck, Germany}
\email{lkatthaen@uos.de}

\author{Richard Sieg}

\address{Richard Sieg, Universit\"at Osnabr\"uck, FB Mathematik/Informatik, 49069 Osnabr\"uck, Germany}

\email{richard.sieg@uos.de}

\begin{abstract}
Let $R = K[X_1, \dotsc, X_n]$ be a polynomial ring over some field $K$.
In this paper, we prove that the $k$-th syzygy module of the residue class field $K$ of $R$ has Stanley depth $n-1$ for $\lfloor n/2 \rfloor \leq k < n$, as it had been conjectured by Bruns et.\,al.\, in 2010.
In particular, this gives the Stanley depth for a whole family of modules whose graded components have dimension greater than $1$. So far, the Stanley depth is known only for a few examples of this type. Our proof consists in a close analysis of a matching in the Boolean algebra.
\end{abstract}

 \subjclass[2000]{Primary: 05E40; Secondary: 13F20}

\keywords{Stanley depth, Hilbert depth, Koszul complex, Boolean algebra}

 \thanks{Both authors were partially supported by the German Research Council DFG-GRK~1916. }

\maketitle

\section{Introduction} \label{sec1}
Stanley decompositions and Stanley depth form an important and investigated topic in combinatorial commutative algebra. These decompositions split a module into a direct sum of graded vector spaces of the form $Sm$, where $S=K[X_{i_1},\dots,X_{i_d}]$ is a subalgebra of the polynomial ring and $m$ a homogeneous element. They were introduced by Stanley in \cite{sta82} and he conjectured that the maximal depth of all possible decompositions of a module - the \emph{Stanley depth} - is greater than or equal to the usual depth of the module. Nowadays this question runs under the name \emph{Stanley conjecture} and is still open.

Bruns et.\,al. introduced a weaker notion of Stanley decompositions in \cite{bku10}, namely \emph{Hilbert decompositions}. In contrast to the former they only depend on the Hilbert series of the module and are usually easier to compute. The analogue of the Stanley depth - the \emph{Hilbert depth} - gives a natural upper bound for the Stanley depth of every module and leads to a weakened formulation of the Stanley conjecture. 

Let $R = K[X_1, \dotsc, X_n]$ be a polynomial ring over some field $K$.
Let us denote by $M(n,k)$ the $k$-th syzygy module of the residue field $K$ of $R$, i.\,e.\,the $k$-th syzygy module in the Koszul complex.
It was shown in the named paper that the Hilbert depth in the upper half of the Koszul complex is $n-1$, where $n$ is the number of variables and  conjectured that the same is true for the Stanley depth. In this paper we prove this conjecture. Our main theorem is the following:
\begin{thm}\label{mthm1}
For all $n$ and $n>k\geq\lfloor\frac{n}{2}\rfloor$ one has $$\sdepth M(n,k)=n-1.$$
\end{thm}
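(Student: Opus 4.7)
The upper bound $\sdepth M(n,k)\le n-1$ follows from the general bound $\sdepth\le\hdepth$ together with the Hilbert depth result of Bruns et al.\ cited above, so the task reduces to producing a multigraded Stanley decomposition of depth at least $n-1$. My first step is to make the $\ZZ^n$-graded structure of $M(n,k)$ explicit. Using the exact truncation $0\to F_n\to F_{n-1}\to\cdots\to F_k\to M(n,k)\to 0$ of the Koszul complex and the identity $\sum_{j=k}^{s}(-1)^{j-k}\binom{s}{j}=\binom{s-1}{k-1}$, one obtains $\dim_K M(n,k)_\alpha=\binom{s-1}{k-1}$ with $s:=|\mathrm{supp}(\alpha)|$ (and zero for $s<k$). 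In particular the multigraded Hilbert function is constant on each ``support stratum,'' so the decomposition problem is naturally fibered over the Boolean lattice $\mathcal B_n=2^{[n]}$.

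Since the squarefree multidegrees covered by a Stanley space $K[Z]m$ with $m$ of squarefree multidegree $A$ form exactly the interval $[A,A\cup Z]$ in $\mathcal B_n$, a decomposition of depth at least $n-1$ corresponds to a partition of the multiset-enriched lattice, in which each $S\subseteq[n]$ with $|S|\ge k$ appears with multiplicity $\binom{|S|-1}{k-1}$, into intervals of the form $[A,[n]]$ (free pieces, depth $n$) or $[A,[n]\setminus\{i\}]$ with $i\notin A$ (depth-$(n-1)$ pieces with one missing variable). The plan is to exhibit such a partition combinatorially and then lift each interval to an explicit multigraded Stanley space inside $M(n,k)$.

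The crux is constructing the partition. I would encode the $\binom{|S|-1}{k-1}$ tokens at a subset $S$ canonically---for instance as the $(k-1)$-subsets of $S$ avoiding $\min S$, or as $k$-subsets containing it---and then build a matching on the multiset-enriched lattice that pairs each token either with itself (yielding a free piece) or with a cover $S\cup\{i\}$ (yielding a depth-$(n-1)$ piece with missing variable $i$). The hypothesis $k\ge\lfloor n/2\rfloor$ is expected to enter through a Hall-type inequality: in the upper half of $\mathcal B_n$ the capacities $\binom{n}{s}\binom{s-1}{k-1}$ compare favourably across adjacent levels, so upward matches can absorb the full overflow from lower levels, whereas below the threshold this balance reverses and the construction cannot close up. Once the matching is fixed, each interval should be realised by an explicit Koszul cycle built from the differentials $d(e_T)$, with linear independence of the resulting Stanley pieces following from the combinatorial shape of the matching. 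I anticipate that the matching step is by far the principal obstacle: unlike a symmetric chain decomposition in $\mathcal B_n$, here every level carries multiplicities and every interval is forced to have top of co-rank at most one, a far more rigid shape than in classical Dilworth-type decompositions, and this is where the ``close analysis of a matching in the Boolean algebra'' advertised in the abstract will take place.
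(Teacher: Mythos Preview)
Your proposal correctly frames the problem but misidentifies where the difficulty lies. The interval partition of the multiset-enriched Boolean lattice that you aim to construct is precisely the \emph{Hilbert} decomposition of $M(n,k)$, and this is already done in \cite{bku10}: the intervals are indexed by $\mathcal S=\{S\subseteq[n]:|S|=k+j,\ j\text{ even}\}$, and each $S$ gives either $[S,[n]]$ or $[S,[n]\setminus\{s\}]$ according to whether $S$ lies in the image of Aigner's explicit lexicographic matching $\psi$. There is no need for a Hall-type existence argument; the matching is given by a closed formula, and the hypothesis $k\ge\lfloor n/2\rfloor$ enters simply because $\psi$ is everywhere defined on levels above the middle.

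The genuine obstacle is the step you pass over in one sentence, namely realising each interval by a Koszul cycle so that the resulting pieces are $R$-linearly independent in every multidegree. This does \emph{not} follow from ``the combinatorial shape of the matching'': one must choose, for each $S\in\mathcal S$, a specific $k$-subset $G(S)\subseteq S$ and then verify that the family $\{X^{S\setminus G(S)}\partial(e_{G(S)})\}$ is independent degree by degree. The paper takes $G(S)=\psi^{|S|-k}(S)$ and introduces the \emph{index} $\ind_M(G)$ of a $k$-set inside a fixed support $M$; the sets $G(S)$ contributing in degree $M$ are exactly the $k$-subsets of $M$ of even index. Independence is then established via a ``triangle condition'' in the squashed order: a delicate lemma shows that if $G\succ H$ and $\widetilde\psi(G)\subset H$ then $\ind_M H=\ind_M G+1$ (or $\ind_M H=1$ in a boundary case), so two sets of even index can never share the distinguished $(k-1)$-subset. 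None of these ingredients---the specific choice $G(S)=\psi^{|S|-k}(S)$, the index, the squashed order, or the parity-shift lemma---appear in your plan, and your anticipation that ``the matching step is by far the principal obstacle'' is exactly backwards: the matching is the known part, and the lifting to an honest Stanley decomposition is where all the work lies.
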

If a module is \emph{finely graded}, i.\,e.\,every graded part has $K$-dimension at most $1$, it is rather easy to transform a Hilbert decomposition into a Stanley decomposition and thus also to compute the respective depth (see \cite[Proposition 2.8]{bku10}). However, this is not the case for the modules of our interest. In particular 
$$\dim M(n,k)_m=\binom{|m|-1}{k-1}$$ where $m$ is a multidegree and $|m|$ its total degree. Hence our theorem provides the Stanley depth for a whole family of modules with higher dimensions in the graded components. Up to now only a few examples of this type are known. To obtain the result we transform the Hilbert decomposition in \cite{bku10} into a Stanley decomposition by applying new combinatorial techniques. Especially we are interested in matchings in the Boolean algebra and their properties.

In Section~\ref{sec:shd} we review the definitions of Stanley and Hilbert decompositions and respective depth and their connections. The next section deals with a matching in the Boolean algebra and its properties, in particular a concrete formula for an injective map from bigger to smaller sets in the upper half of this poset. This part mainly relies on a paper by Aigner (see \cite{aig73}). In the last section we firstly review the Hilbert decomposition used for the proof of Bruns et.\,al.\,for the Hilbert depth in the upper half of the Koszul complex. We then introduce the notion of the \emph{index} of a subset $G$ of a given set $M$. It is the highest power of the matching restricted to the power set of $M$ for which $G$ is in its image. We argue that in order to prove the theorem, we have to show that the subsets of size $k$ with even index of every set $M$ have an order fulfilling a certain condition. As a final step it is shown that the \emph{squashed} order satisfies this condition. 

The methods and notions developed in this paper are not only important for the sake of the proof. They also give new interesting insights from a purely combinatorial point of view.

Note that we will not investigate the Stanley depth in the lower half of the Koszul complex. Already the Hilbert depth behaves quite irregular in this case as it was pointed out in the last sections of \cite{bku10}. Moreover, the techniques developed in this paper are rather special and cannot be applied in the lower half. 

For convenience we sometimes write $[n]$ for $\set{1,\dots,n}$. Furthermore we call a set with $l$ elements an $l$-set.
\section{Stanley and Hilbert Decompositions}\label{sec:shd}
We briefly review the basic concepts as given in \cite{bku10}. For a fuller treatment of Stanley decompositions see for example \cite{her13}.

We will denote by $R$ the polynomial ring $K[X_1,\dots,X_n]$ in $n$ variables over a field $K$. It is equipped with a multigrading over $\ZZ^n$, i.e.\,$\deg(X_i)=e_i$ where $e_i$ is the $i$-th unit vector. 

Let $M$ be a finitely generated graded $R$-module and $m\in M$ a homogeneous element. Furthermore let $Z\subseteq\{X_1,\dots,X_n\}$. The module $K[Z]m$ is called a \emph{Stanley space} of $M$ if $K[Z]m$ is a free $K[Z]$-submodule of $M$.

\begin{dfn}\label{def:sd}
Let $M$ be a finitely generated graded $R$-module. A \emph{Stanley decomposition} $$\mathcal{D}=(K[Z_i],m_i)_{i\in I}$$ is a finite decomposition of $M$ as a graded $K$-vector space $$M=\bigoplus_{i\in I}K[Z_i] m_i$$ where the $K[Z_i]m_i$'s are Stanley spaces of $M$.

This direct sum  forms an $R$-module and thereby has a depth. This allows us to define the \emph{Stanley depth} of $M$ as the maximal depth of all possible Stanley decompositions: $$\sdepth M:=\max\{\depth\mathcal{D}\mid \mathcal{D}\text{ is a Stanley decomposition of }M\}.$$
\end{dfn}
The Hilbert decomposition and depth are defined in a similar manner, but they only depend on the Hilbert series of $M$, which makes them easier to compute.
\begin{dfn}
Let $M$ and $R$ be as in Definition~\ref{def:sd}. A \emph{Hilbert decomposition} $$\mathcal{D}=(K[Z_i],s_i)_{i\in I}$$ is a finite family of modules $K[Z_i]$ and multidegrees $s_i\in\ZZ^n$, such that $$M\cong \bigoplus_{i\in I}K[Z_i](-s_i)$$ as a graded $K$-vector space.

Furthermore, the \emph{Hilbert depth} of $M$ is defined as the maximal depth of all possible Hilbert decompositions: $$\hdepth M:=\max\{\depth\mathcal{D}\mid \mathcal{D}\text{ is a Hilbert decomposition of }M\}.$$
\end{dfn}

Note that by definition we immediately have $$\hdepth M\geq\sdepth M$$ for any $R$-module $M$. Moreover, the Stanley and Hilbert depth can also be defined for a $\ZZ$-graded module, as it was down in \cite{bku10}.

\section{Matching in the Boolean Algebra}\label{mba}
We review the matching (i.\,e.\,an injection $f$ with $f(G)\subset G$) in the upper half of the Boolean algebra that will be used for the construction of the Hilbert and Stanley decomposition of the considered modules in the next section. This map is known in combinatorics for some time. Especially it was used to give a proof of \emph{Sperner's Theorem} (see for instance \cite{and87}).

The \emph{lexicographic mapping} $\psi$ is a (partially defined) injective map on the Boolean algebra which assigns to an $(l+1)$-set an $l$-sets in the following way. 
Firstly, we write down the Boolean algebra and sort each level lexicographically.
For each $(l+1)$-set $G\subseteq\{1,\dots,n\}$, $\psi(G)$ is the lexicographically smallest $l$-subset of $G$ that is not already in the image of $\psi$. If there exists no such subset, then $\psi$ is undefined.

While the definition makes clear that $\psi$ is injective, it is not very convenient to work with.
Aigner provided a concrete formula for the above matching (see \cite{aig73}).
Stanton and White gave a helpful pictorial interpretation of this formula in \cite{sw86} which will be discussed below. The following reformulation is motivated by this interpretation.

We associate to a set $G\subseteq\{1,\dots,n\}$ an $n$-dimensional incidence vector $$\chi_G(g)=\begin{cases}
1 & \text{if }g\in G; \\ -1& \text{if }g\notin G.
\end{cases}$$ 
Furthermore we set $\chi_G(0):=0$. Now we look at all elements of $G$, for which the function 
\begin{align*}
\rho_G(g)&:=\sum_{j=0}^{g}\chi_G(j),\quad g\in\{0,1,\dots,n\}
\intertext{is maximized:}
\alpha(G)&:=\max_{g\in G\cup\{0\}}\{\rho_{G}(g)\}\\
N(G)&:=\{g\in G\cup\{0\}\mid \rho_{G}(g)=\alpha(G)\},
\intertext{and take the smallest element}
\nu(G)&:=\min N(G).
\end{align*}
Then the map $\psi$ is defined by deleting the element $\nu(G)$: $$\psi(G):=G\setminus\{\nu(G)\}.$$ This map is defined if and only if $\nu(G)>0$ (equivalently if and only if $\alpha(G) > 0$).
As $\alpha(G) \geq \rho_G(n) = 2|G| - n$, this is indeed the case for all subsets $G \subseteq \{1,\dots,n\}$ with $|G|\geq\lceil\frac{n+1}{2}\rceil$.

As mentioned above Stanton and White provided a geometric interpretation of $\psi$. The vector $\chi_G$ can be seen as a lattice path, where a 1 means going one up to the right and $-1$ means going one down to the right. Then $\rho_{G}(g)$ determines the height in place $g$ and $N(G)$ consists of all global maxima. Therefore, $\psi$ ``flips'' the first global maximum (if it is above the $x$-axis), as illustrated in Figure~\ref{fig:psi}.
\begin{figure}[h]
\centering
\includegraphics{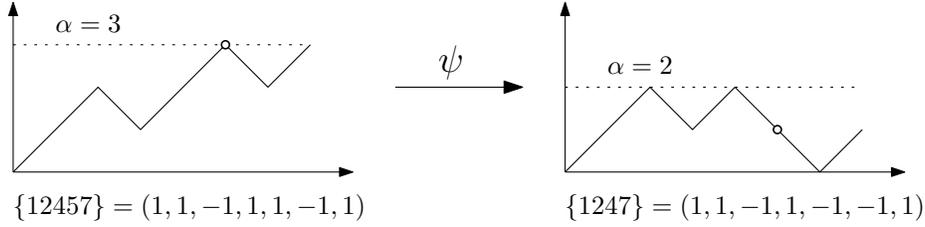}
\caption{Geometric interpretation of $\psi$}
\label{fig:psi}
\end{figure}

\subsection{The Inverse $\phi$}\label{phi}
Sometimes it is helpful to also consider the mapping in the other direction in the Boolean algebra called $\phi$. Again the map was given in explicit terms by Aigner in \cite{aig73}. We again look at the set $N(G)$ for which $\rho_{G}$ is maximized, but this time we take the maximal element: $$\mu(G)=\max N(G).$$ Then $$\phi(G)=G\cup \{\mu(G)+1\}.$$ So $\phi$ is defined if and only if $\mu(G)<n$. This is the case for all sets in the lower half of the Boolean algebra.

If the set is considered as a lattice path like above, $\phi$ ``flips'' the edge of the last global maximum up, i.\,e.\,changes the subsequent entry to a 1, see Figure~\ref{fig:phi}.
\begin{figure}[h]
\centering
\includegraphics{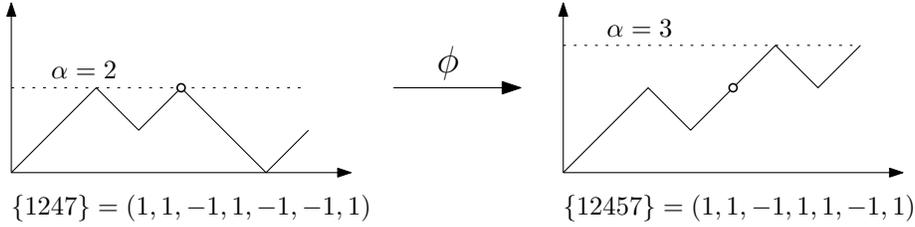}
\caption{Geometric interpretation of $\phi$}
\label{fig:phi}
\end{figure}

The following statement from \cite[Theorem 3]{aig73} will be used in later proofs:
\begin{prop}\label{psiphi}
A subset of $\{1,\dots,n\}$ is in the image of $\psi$ if and only if $\phi$ is defined on this set and vice versa. Furthermore $\psi$ and $\phi$ are inverse to each other on the respective domain. 
\end{prop}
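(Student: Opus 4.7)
The plan is to prove the stronger statement that $\psi$ and $\phi$ are mutually inverse on their respective domains; the image-versus-domain equivalences then follow formally. Concretely, I will establish (i) $\phi$ is defined on $\psi(G)$ with $\phi(\psi(G))=G$ whenever $\nu(G)>0$, and (ii) $\psi$ is defined on $\phi(G)$ with $\psi(\phi(G))=G$ whenever $\mu(G)<n$. From (i) and (ii), every set in $\operatorname{Im}\psi$ automatically lies in the domain of $\phi$ (and conversely $\phi(G)$ realises $G$ as an element of $\operatorname{Im}\psi$ when $\mu(G)<n$), which yields both biconditionals at once.

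For (i), set $a:=\nu(G)$. Deleting $a$ flips the up-edge at position $a$ into a down-edge, so
\[
  \rho_{\psi(G)}(g)=\begin{cases}\rho_G(g),& g<a,\\ \rho_G(g)-2,& g\geq a.\end{cases}
\]
Because $a$ is the first global maximum of $\rho_G$, the new maximum equals $\alpha(\psi(G))=\alpha(G)-1$, and no $g\geq a$ can attain it since there $\rho_{\psi(G)}(g)\leq\alpha(G)-2$. The key point is to check that $a-1\in\psi(G)\cup\{0\}$: if $a>1$ and $a-1\notin G$, then $\chi_G(a-1)=-1$ would force $\rho_G(a-2)=\alpha(G)$, putting $a-2$ into $N(G)$ and contradicting $a=\min N(G)$. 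Hence $\mu(\psi(G))=a-1<n$, so $\phi$ is defined on $\psi(G)$ and $\phi(\psi(G))=\psi(G)\cup\{a\}=G$. Step (ii) is symmetric: with $b:=\mu(G)$, first observe $b+1\notin G$ (else $\rho_G(b+1)=\alpha(G)+1$ contradicts the definition of $\alpha$); adding $b+1$ raises $\rho$ by $2$ past position $b$, so $\alpha(\phi(G))=\alpha(G)+1$ is attained first (in the sense of minimality) at $g=b+1$, giving $\nu(\phi(G))=b+1>0$ and $\psi(\phi(G))=G$.

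The main obstacle I foresee is precisely the subtle membership argument flagged above: since $\mu$ and $\nu$ only range over the set itself together with $0$, one must justify that $a-1$, respectively $b+1$, is genuinely available as an argument for $\mu(\psi(G))$ or $\nu(\phi(G))$. This is where the extremality of $a=\min N(G)$ and $b=\max N(G)$ is essential, and it is also what makes the lattice-path picture of Figures~\ref{fig:psi} and~\ref{fig:phi} visually convincing: flipping the first (resp.\ last) maximum edge produces a path whose last (resp.\ first) maximum is the immediately preceding (resp.\ following) vertex. Once this observation is secured, the remainder is bookkeeping on the partial sums $\rho_G$ under a single-edge flip.
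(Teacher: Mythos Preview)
Your argument is correct. The paper itself does not prove this proposition at all; it merely cites \cite[Theorem~3]{aig73}, so there is nothing in the paper to compare your approach against. What you have written is essentially the standard lattice-path proof (in the spirit of Aigner's original).

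One small detail worth making explicit in step~(i): when you deduce $a-2\in N(G)$ from $\rho_G(a-2)=\alpha(G)$, you also need $a-2\in G\cup\{0\}$, since $N(G)$ is restricted to that set. This is easy: if $a-2=0$ then $\alpha(G)=\rho_G(0)=0$, contradicting $\alpha(G)>0$; and if $a-2\geq 1$ with $a-2\notin G$, then $\rho_G(a-3)=\rho_G(a-2)+1>\alpha(G)$, contradicting that $\alpha(G)$ is the global maximum of $\rho_G$. With that addition your two steps are airtight, and the image/domain biconditionals then follow formally exactly as you indicate.
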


\section{Hilbert and Stanley depth in the Koszul Complex}\label{sec:hsdko}
In order to prove our main result we need to review the arguments from \cite{bku10} which show that the Hilbert depth in the upper half of the Koszul complex is $n-1$.

Let $K$ be a field and $R=K[X_1,\dots,X_n]$.
Then $K=R/\mathfrak{m}, \mathfrak{m}=(X_1,\dots,X_n)$ and the \emph{Koszul complex} is the following minimal free resolution of $K$:
$$
0\to\bigwedge^n R ^n\stackrel{\partial}{\to}\bigwedge^{n-1}R^n\stackrel{\partial}{\to}\dots\stackrel{\partial}{\to}R^n\stackrel{\partial}{\to}R\to0
$$
where $\partial$ is the boundary operator 
\begin{equation}\label{eq:diff}
\partial(e_{i_1}\wedge\dots\wedge e_{i_k})=\sum_{j=1}^{k}(-1)^{j+1}X_j \,e_{i_1}\wedge\dots\wedge\widehat{e_{i_j}}\wedge\dots\wedge e_{i_k}.
\end{equation}

By $M(n,k)$ we denote the $k$-th syzygy module of $K$.
\newcommand{\Sc}{\mathcal{S}}
We recall the Hilbert decomposition of $M(n,k)$ constructed in \cite{bku10}.
Let 
\[ \Sc := \set{ S \subseteq [n] \with |S| = k + j, j \text{ even}}. \]
For $S \in \Sc$ we further set
\begin{equation}\label{eq:Zs}
	Z_S=\begin{cases}
		\{X_i\with i\in[n]\} &\text{if }S\text{ is not in the image of }\psi;\\
		\{X_i\with i\in[n]\setminus\set{s}\} & \text{if }S = \psi(S\cup\{s\}). 
	\end{cases}
\end{equation}
Then, as shown in the proof of \cite[Theorem 3.5]{bku10},
\begin{equation}
(K[Z_S],S)_{S \in \Sc}\label{eq:hdec}
\end{equation}
is a Hilbert decomposition of $M(n,k)$ for $n>k\geq\lfloor\frac{n}{2}\rfloor$.

Throughout the rest of the paper we let $n>k\geq\lfloor\frac{n}{2}\rfloor$ be fixed.

We will show that also the Stanley depth in the upper half of the Koszul complex is $n-1$ by turning the Hilbert decomposition \eqref{eq:hdec}
into a Stanley decomposition.
For this, we need to choose for every $S \in \Sc$ an element $m_S \in M(n,k)$ of multidegree $S$.

Note that $M(n,k)$ is the image of the $k$-th boundary map in the Koszul complex, hence it is generated by the elements $\partial (e_G)$, where $e_G := e_{i_1} \wedge \dotsb \wedge e_{i_k}$ with $G = \set{i_1, \dotsc, i_k} \subset [n]$.
Moreover, if $G \subseteq S$, then $X^{S \setminus G}\partial(e_G)$ has multidegree $S$, where $X^{S \setminus G}$ is the monomial of degree ${S \setminus G}$.
Hence, we essentially need to choose subsets $G(S) \subseteq S$ of cardinality $k$ for every $S \in \Sc$.
It turns out that the following choice works:
\begin{equation}\label{eq:choice}
G(S) := \psi^{|S|-k}(S) \qquad \text{ for } S \in \Sc.
\end{equation}

Thus we are going to prove the following theorem:
\begin{thm}[Theorem~\ref{mthm1}]\label{mthm2}
Let $n, k \in \NN$ such that $n>k\geq\lfloor\frac{n}{2}\rfloor$. 
Then
\begin{equation}\label{eq:sdec}
(K[Z_S],m_S)_{S \in \Sc}
\end{equation}
is a Stanley decomposition of $M(n,k)$, where $Z_S$ is as in \eqref{eq:Zs} and 
$m_S := X^{S \setminus G(S)} \partial(e_{G(S)})$.
\end{thm}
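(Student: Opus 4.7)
The plan is to exploit the fact that \eqref{eq:hdec} is already known to be a Hilbert decomposition of $M(n,k)$: consequently, the multigraded Hilbert series of $\bigoplus_{S\in\mathcal{S}} K[Z_S]m_S$ already coincides with that of $M(n,k)$. Hence for each multidegree $\alpha\in\NN^n$ with support $M:=\set{i:\alpha_i>0}$, it suffices to show that the unique multidegree-$\alpha$ elements of the various $K[Z_S]m_S$ --- namely $X^{\alpha-G(S)}\partial(e_{G(S)})$ for the $S\in\mathcal{S}$ contributing to $\alpha$ --- are $K$-linearly independent in $M(n,k)_\alpha$. Unpacking the definitions, the contributing $S$ are precisely those $S\subseteq M$ such that, in the case $S=\psi(S\cup\set{s})$, also $s\notin M$; equivalently, the $S\subseteq M$ that lie outside the image of $\psi$ restricted to $2^M$.

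A first sanity check is that distinct contributing $S$ give distinct $G(S)$. Suppose $G(S_1)=G(S_2)$ with $|S_1|<|S_2|$. Then $S_2=\phi^{|S_2|-|S_1|}(S_1)$, so $S_1\in\mathrm{im}(\psi)$, and by Proposition~\ref{psiphi} its removed element is $s_1=\nu(\phi(S_1))=\mu(S_1)+1$. But $s_1\in\phi(S_1)\subseteq S_2\subseteq M$ contradicts the required $s_1\notin M$ for $S_1$ to contribute. So the proposed generators are at least pairwise distinct.

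The next step is a combinatorial reinterpretation. The assignment $e_G\mapsto X^{\alpha-G}e_G$ identifies the $\alpha$-graded part of the Koszul complex with the ``reduced'' exterior complex $V_\bullet$ on $M$, where $V_j:=\bigoplus_{G\subseteq M,\,|G|=j}K\,e_G$ and $\partial(e_G)=\sum_{i\in G}\pm\, e_{G\setminus\set{i}}$. Since the Koszul complex is exact in every nonzero multidegree, $M(n,k)_\alpha\cong V_k/\partial V_{k+1}$, and under this isomorphism $X^{\alpha-G(S)}\partial(e_{G(S)})$ corresponds to the class $\overline{e_{G(S)}}$. The theorem is thereby reduced to the purely combinatorial claim that $\set{\overline{e_{G(S)}}}_{S\text{ contributing}}$ is a basis of $V_k/\partial V_{k+1}$.

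To phrase this claim intrinsically to $M$, I would introduce, for each $G\subseteq M$, its \emph{index} $\ind_M(G)$: the largest $j$ with $G\in\mathrm{im}(\psi_M^j)$, where $\psi_M$ denotes $\psi$ restricted to $2^M$. A short matching analysis (essentially already implicit in the unpacking above) identifies the $G(S)$ for contributing $S$ with precisely the $k$-subsets of $M$ of \emph{even} index, via $G\mapsto S=\phi^{\ind_M(G)}(G)$. Their count equals $\binom{|M|-1}{k-1}=\dim V_k/\partial V_{k+1}$, so linear independence is equivalent to spanning. The main obstacle is therefore the spanning statement: I would seek a linear order $\prec$ on the relevant $k$-subsets such that, for every odd-index $G$, the Koszul relation $\partial(e_{\phi_M(G)})\equiv 0$ in the quotient lets one rewrite $\overline{e_G}$ as a $K$-linear combination of classes $\overline{e_{G'}}$ that are either of even index or strictly $\prec G$. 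A downward induction on $\prec$ then settles spanning. The paper concludes by verifying that the \emph{squashed} order fulfils this condition, via a careful case analysis driven by the lattice-path picture of $\psi_M$ from Section~\ref{mba}; I expect this final combinatorial verification to be the hardest part.
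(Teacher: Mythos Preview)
Your reduction is correct and matches the paper: starting from the known Hilbert decomposition, one must verify linear independence of the contributing $m_S$ in each multidegree; the contributing $S\subseteq M=\supp(m)$ are exactly those outside the image of $\psi$ restricted to $2^M$; and the associated $G(S)$ are precisely the $k$-subsets of $M$ of even index. Your check that distinct contributing $S$ yield distinct $G(S)$ is also fine.

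The divergence is in the final combinatorial step. You pass to the quotient $V_k/\partial V_{k+1}$ and aim to prove \emph{spanning}: for each odd-index $G$, use the Koszul relation coming from the $(k{+}1)$-set $\phi(G)$ to rewrite $\overline{e_G}$ in terms of even-index classes or classes $\prec G$. The paper instead stays inside $V_{k-1}\supseteq M(n,k)_m$ and proves \emph{linear independence} directly, via the $\triangle$-condition: each even-index $G$ carries a distinguished $(k{-}1)$-subset $T_G:=\widetilde{\psi}(G)$ with $T_G\not\subseteq H$ for every even-index $H\prec G$ in the squashed order. This makes the matrix of $\partial$ restricted to the even-index $e_G$ upper triangular with nonzero diagonal. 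The combinatorial input is Lemma~\ref{lem:ind}: if $G\succ H$ and $\widetilde{\psi}(G)\subset H$, then $\ind_M(H)=\ind_M(G)+1$ (or $=1$), so $H$ cannot also have even index.

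Your $(k{+}1)$-set route could in principle be made to work, but the squashed order does \emph{not} satisfy your condition as stated. For $n=5$, $k=3$, $M=[5]$, the set $G=\{1,2,4\}$ has index $1$ and $\phi(G)=\{1,2,4,5\}$; among the other $3$-subsets of $\phi(G)$, the set $\{1,2,5\}$ also has index $1$, yet $\{1,2,5\}\succ\{1,2,4\}$ in the squashed order. Hence your induction does not close with squashed; one would need a different order and a separate lemma. The paper sidesteps this by working one level \emph{down} with the $\triangle$-condition, for which Lemma~\ref{lem:ind} is precisely tailored.
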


\newcommand{\Gc}{\mathcal{G}}
\newcommand{\Cc}{\mathcal{C}}
\newcommand{\supp}{\mathrm{supp}}
To show that this is really a Stanley decomposition, we use the following criterion:
\begin{prop}[Proposition 2.9, \cite{bku10}]\label{hdtosd}
Let $(K[Z_i],s_i)_{i \in I}$ be a Hilbert decomposition of a module $M$.
For every $i \in I$ choose a homogeneous nonzero element $m_i\in M$ of degree $s_i$.
Then $(K[Z_i],m_i)_{i\in I}$ is a Stanley decomposition of $M$, if for every multidegree $m$ the family 
$$\Cc(m)=\set{m_i \with (K[Z_i]m_i)_m\neq 0}$$ 
is linearly independent over $R$.
\end{prop}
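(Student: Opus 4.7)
The plan is to verify the two defining conditions of a Stanley decomposition: (a) each $K[Z_i]m_i$ is a free $K[Z_i]$-submodule of $M$, i.e.\ a Stanley space, and (b) the sum $\sum_i K[Z_i]m_i$ is direct and equals $M$ as a graded $K$-vector space. Both pieces will be extracted from the $R$-linear independence hypothesis combined with the Hilbert series data already supplied by the Hilbert decomposition. The preliminary bookkeeping is as follows: for each multidegree $m \in \ZZ^n$, set $I(m) := \{i \in I : (K[Z_i](-s_i))_m \neq 0\}$, so that $\dim_K M_m = |I(m)|$ since each nonzero $(K[Z_i](-s_i))_m$ is one-dimensional; and on the module side $(K[Z_i]m_i)_m$ is spanned by $X^{m-s_i}m_i$ when $m - s_i$ is nonnegative and supported on $Z_i$, so it is at most one-dimensional, with dimension exactly one precisely when $i \in \Cc(m)$. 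In particular there is a tautological inclusion $\Cc(m) \subseteq I(m)$.

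The central step is to show that each chosen generator $m_i$ is torsion-free over $R$. Because $m_i \neq 0$ and $(K[Z_i]m_i)_{s_i} = Km_i$, the element $m_i$ already lies in $\Cc(s_i)$, so the hypothesis applied at the multidegree $m = s_i$ asserts that no nontrivial $R$-multiple of $m_i$ can be cancelled by $R$-multiples of the other $m_j$'s indexed by $\Cc(s_i)$; in particular, $fm_i = 0$ with $f \in R$ forces $f = 0$. This upgrades the canonical graded surjection $K[Z_i](-s_i) \twoheadrightarrow K[Z_i]m_i$ into an isomorphism of $K[Z_i]$-modules, which settles (a). As a byproduct, $X^{m-s_i}m_i \neq 0$ whenever $X^{m-s_i} \in K[Z_i]$, which promotes the inclusion $\Cc(m) \subseteq I(m)$ to the equality $\Cc(m) = I(m)$ for every multidegree $m$.

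For (b), directness of $\sum_i (K[Z_i]m_i)_m$ in any fixed multidegree $m$ is immediate: a relation $\sum_{i \in \Cc(m)} c_i X^{m-s_i}m_i = 0$ with $c_i \in K$ is an $R$-linear relation $\sum_i f_i m_i = 0$ with $f_i = c_i X^{m-s_i}$, so the hypothesis forces each $f_i$, and hence each $c_i$, to vanish. A final dimension count
\[
\dim_K \bigoplus_i (K[Z_i]m_i)_m \,=\, |\Cc(m)| \,=\, |I(m)| \,=\, \dim_K M_m
\]
promotes the inclusion $\bigoplus_i K[Z_i]m_i \hookrightarrow M$ to an equality in every multidegree, and hence to an equality of graded $K$-vector spaces. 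There is no serious obstacle here; the only subtle turn is recognising that the $R$-linear independence hypothesis applied at the tautological multidegree $m = s_i$ already secures torsion-freeness of $m_i$, which is precisely what prevents $K[Z_i]m_i$ from degenerating to a proper quotient of a Stanley space.
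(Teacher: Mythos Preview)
The paper does not supply its own proof of this proposition; it is quoted verbatim as Proposition~2.9 of \cite{bku10} and used as a black box. So there is nothing in the present paper to compare your argument against.

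That said, your proof is correct and is essentially the argument one would expect in \cite{bku10}. The two ingredients you isolate are exactly the right ones: (i) $R$-linear independence of $\Cc(s_i)$ forces each $m_i$ to be torsion-free, so $K[Z_i](-s_i)\to K[Z_i]m_i$ is an isomorphism and each piece is a genuine Stanley space; (ii) in a fixed multidegree $m$, a $K$-relation among the $X^{m-s_i}m_i$ is already an $R$-relation among the $m_i$ in $\Cc(m)$, whence directness. The dimension count $|\Cc(m)|=|I(m)|=\dim_K M_m$ then closes the gap. One cosmetic remark: the paper defines $\Cc(m)$ as a family of \emph{elements} $m_i$, while you tacitly identify it with the index set $\{i\in I:(K[Z_i]m_i)_m\neq 0\}$; this is harmless since linear independence of a family is by definition a statement indexed by $I$, but it would not hurt to say so explicitly.
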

It turns out to be more convenient to consider instead the sets 
\[ \Gc(m) := \set{ G(S) \with (K[Z_S] m_S)_m\neq 0 }. \]
Clearly $\Cc(m)$ and $\Gc(m)$ determine each other. Moreover it is easy to see that $\Cc(m)$ and $\Gc(m)$ only depend on the support of $m$: \[ \supp(m)=\set{i\in [n]\with \text{the }i\text{-th component of }m\text{ is non-zero}}.\] So by abuse of notation we write $\Cc(M):=\Cc(m)$ and $\Gc(M):=\Gc(m)$ if $M=\supp(m)$.

We will check the linear independence with the following condition:
\begin{dfn}
A family $\Gc$ of $k$-sets fulfills the \emph{triangle-condition} ($\triangle$-condition), if there is an order $\prec$ on $\Gc$ such that every $G \in \Gc$ contains a $(k-1)$-subset $T$ (the \emph{distinguished} set) which is not contained in the smaller sets, i.\,e.\,$T\nsubseteq H$ for every $H\prec G$.
\end{dfn}
\begin{lem}
If a family $\Gc$ fulfills the $\triangle$-condition, then the set $\set{\partial (e_G) \with G \in \Gc}$ is  linearly independent.
\end{lem}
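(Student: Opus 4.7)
The plan is a standard leading-term argument in the free module $\bigwedge^{k-1} R^n$, mirroring Gaussian elimination on a triangular system. Suppose, for contradiction, that there is a nontrivial $R$-linear relation $\sum_{G \in \Gc} r_G \partial(e_G) = 0$. After extending the partial order $\prec$ to a total order (which preserves the distinguished-set property), I would pick the $\prec$-maximal $G^\ast \in \Gc$ with $r_{G^\ast}\neq 0$, and let $T := T_{G^\ast}$ denote the distinguished $(k-1)$-subset furnished by the $\triangle$-condition.

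Next, I would read off the coefficient of the basis element $e_T$ in the standard expansion. Using $\partial(e_H) = \sum_{i \in H}(-1)^{\sigma(i,H)}X_i\, e_{H\setminus\{i\}}$, the only $\partial(e_H)$ contributing to $e_T$ are those with $H \supseteq T$ and $|H|=k$, each contributing $\pm X_{H \setminus T}\, r_H$. So the coefficient of $e_T$ in the dependence relation is
\[
\sum_{\substack{H \in \Gc \\ H \supseteq T}} \pm\, X_{H \setminus T}\, r_H.
\]

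The key step is to show only $H = G^\ast$ survives in this sum. For $H \prec G^\ast$, the $\triangle$-condition gives $T \not\subseteq H$, so such $H$ are excluded from the sum. For $H \succ G^\ast$, maximality of $G^\ast$ forces $r_H = 0$. Hence the coefficient of $e_T$ equals $\pm X_{G^\ast \setminus T}\, r_{G^\ast}$, which is nonzero because $R$ is an integral domain and $r_{G^\ast}\neq 0$. This contradicts the hypothesis that the full sum vanishes, proving linear independence.

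There is no real obstacle here; the whole argument is the observation that the $\triangle$-condition is designed precisely so that, for each $G$, the basis vector $e_{T_G}$ serves as a ``pivot'' that isolates $G$ from all $\prec$-smaller sets, while the extremal choice of $G^\ast$ takes care of the $\prec$-larger ones. The only small subtlety is to note that different $X_{H\setminus T}$ are distinct variables (so no accidental cancellation inside a single coefficient), but in our case only one term survives anyway.
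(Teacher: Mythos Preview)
Your argument is correct and is essentially the paper's own proof spelled out: the paper just says that the $\triangle$-condition makes the matrix of $\partial$ restricted to $\{e_G : G \in \Gc\}$ upper triangular, and your leading-term contradiction is exactly how one reads off linear independence from that.

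One small correction: your parenthetical claim that extending a partial order $\prec$ to a total order ``preserves the distinguished-set property'' is false in general, since a linear extension enlarges each set $\{H : H \prec G\}$ and the requirement $T_G \not\subseteq H$ may fail for the newly added $H$. (For instance, with the trivial partial order on $\Gc=\{\{1,2\},\{1,3\},\{2,3\}\}$ the $\triangle$-condition holds vacuously, yet $X_1\,\partial(e_{\{2,3\}})-X_2\,\partial(e_{\{1,3\}})+X_3\,\partial(e_{\{1,2\}})=0$.) The paper's ``order'' should simply be read as a total order---consistent with its triangular-matrix language and its later use of the squashed order---so this step is unnecessary rather than needed.
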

\begin{proof}
By the definition of the differential map $\partial$ \eqref{eq:diff}, we have that 
$$\partial(e_G)=\pm e_T+\ldots,$$ where $T\subset G$ is the distinguished subset. Because $\Gc$ satisfies the $\triangle$-condition, the term $e_T$ does not appear in $\partial(e_H)$ for every $H\prec G$. Hence, the restriction of the chain map $\partial$ to $\set{e_G \with G \in \Gc}$ forms an (upper) triangular matrix.
\end{proof}
In particular, if $\Gc(m)$ satisfies the $\triangle$-condition then $\Cc(m)$ is linearly independent. So to prove Theorem~\ref{mthm2}, we have to show that $\Gc(m)$ fulfills the $\triangle$-condition for every multidegree $m$. 

For this we need a more explicit description of the sets $\Gc(m)$. Fix a multidegree $m$.
Looking at the Hilbert decomposition we see that $Z_S$ is the whole polynomial ring if $S$ is not in the image of $\psi$, or one variable is missing, which is the one dropped by $\psi$. 
This means that $(K[Z_S] m_S)_m\neq 0$ if and only if $S$ is not in the image of the restriction of $\psi$ to all subsets of $\supp(m)$.
Overall, the set of all contributing generators for a multidegree $m$ is given by
\[
\Gc(M) =\set{G(S) \with S \in \Sc, S\subseteq M, S \notin \mathrm{Im}\psi|_{\mathcal{P}(M)}}.
\]
Here, $M=\supp(m)$ and $\mathcal{P}(M)$ denotes the power set of $M$.
Recall that $G(S) = \psi^{|S|-k}(S)$ and that $|S|-k$ is even by the definition of $\Sc$.
Hence $\Gc(M)$ consists of the $k$-subsets of $M$, which lie in an even power of $\psi$ restricted to $\mathcal{P}(M)$, but not in the odd one:
$$\Gc(M)= \left\{G\in\binom{M}{k} \;\middle|\;\exists\, i\; \exists M'\subseteq M:\psi^{2i}(M')=G\text{ and }\nexists\, M''\subseteq M:\psi^{2i+1}(M'')=G \right\}.$$
Consequently, the following definition is quite useful:
\begin{dfn}
For a subset $G\subseteq M$ the \emph{index} of $G$ in $M$ is defined as 
$$\ind_M (G)=\max\set{i \with \exists\, M'\subseteq M: \psi^{i}(M')=G }.$$
\end{dfn}
This allows us to write
$$\Gc(M)= \left\{G\in\binom{M}{k} \;\middle|\; \ind_M(G) \text{ is even} \right\}.$$

Note that by Proposition~\ref{psiphi} the index can also be expressed in terms of $\phi$:
$$\ind_M(G)=\max\{i\mid \phi^i(G) \text{ is defined and }\phi^i(G)\subseteq M \}.$$ This formula can be quite useful for later computations and proofs. 
\begin{exmp}
For the multidegree $12457$ we have one element of index 2 (147) and five elements of index 0 (125, 127, 245, 257), as it can be seen in Figure~\ref{fig:ex}.
 \begin{figure}[h]
 \centering
 \includegraphics{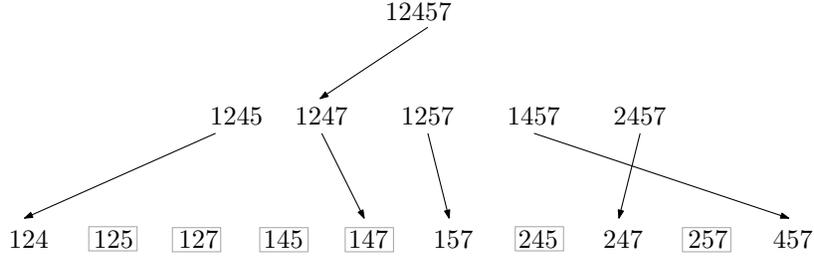}
 \caption{Subsets of $12457$ with even index.}
 \label{fig:ex}
 \end{figure}
\end{exmp}

As a final step, we show that the \emph{squashed} order works for the family $\Gc(M)$. It is defined for two sets of the same size as $$G\succ H\,:\Leftrightarrow\,\max G\Delta H\in G,$$ where $\Delta$ denotes the symmetric difference. For details, see \cite[Ch. 7]{and87}.

Moreover, the distinguished subsets are the $\widetilde{\psi}(G)$'s where the function is defined as before, but drops the restriction of Section~\ref{mba}:
\begin{align*} \widetilde{\alpha}(G)&:=\max_{g\in G}\{\rho_G(g)\},\\
\widetilde{N}(G)&:=\{g\in G\mid \rho_G(g)=\widetilde{\alpha}(G)\},\quad \widetilde{\nu}(G):=\min\widetilde{N}(G),\\
\widetilde{\psi}(G)&:=G\setminus\{{\widetilde{\nu}(G)}\}.
\end{align*} So the function is always defined, but we lose the injectivity. As we will see later, this does not impose a problem for our result.

We continue by showing that in most cases if a set succeeds another set in the squashed order and the latter set contains the claimed distinguished subset of the former, the index increases exactly by one.

Recall that $$\rho_G(g)=\sum_{j=0}^{g}\chi_G(j),\quad g\in\{0,1,\dots,n\}.$$
\begin{lem}\label{lem:ind}
Let $G$ and $H$ be subsets of a set $M\subseteq\{1,\dots,n\}$ with $|G|=|H|\geq\lfloor\frac{n}{2}\rfloor$. Furthermore let $G\succ H$ and $\wpsi(G)\subset H$. Then the following hold:
\begin{enumerate}
\item If $\walpha(G)\geq0$ then $\ind_M(H)=\ind_M(G)+1$;
\item If $\walpha(G)<0$ then $\ind_M(H)=1$.
\end{enumerate}
\end{lem}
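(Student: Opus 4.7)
The plan is to translate the hypotheses into a lattice path identity, apply $\phi$ once explicitly, and then iterate. First I would note that $G \succ H$, $\wpsi(G) \subset H$, and $|G| = |H|$ jointly force $H = (G \setminus \{\wnu(G)\}) \cup \{h\}$ for a unique $h \in [n]\setminus G$ with $h < \wnu(G)$, and that consequently $\rho_H(g) = \rho_G(g) + 2$ for $h \leq g < \wnu(G)$ while $\rho_H(g) = \rho_G(g)$ otherwise. In particular, since $\wnu(G) \in G$, one has $\rho_G(\wnu(G)-1) = \walpha(G) - 1$ and hence $\rho_H(\wnu(G)-1) = \walpha(G) + 1$.

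For part~(1) the first step is to identify $\phi(H) = G \cup \{h\}$. The minimality of $\wnu(G)$ in $\widetilde{N}(G)$ gives $\rho_G(g) < \walpha(G)$ for every $g \in G$ with $g < \wnu(G)$, and $h \notin G$ forces $\rho_G(h) < \walpha(G)$; combining these with the $+2$ shift bounds $\rho_H(g) \leq \walpha(G) + 1$ on all of $H \cup \{0\}$. A short case split---if $\wnu(G)-1 \notin G$, then the path forces $\wnu(G) \leq 2$, so that $h = \wnu(G) - 1 \in H$---shows $\wnu(G) - 1 \in H$, whence $\alpha(H) = \walpha(G)+1$ with $\mu(H) = \wnu(G)-1$, and therefore $\phi(H) = H \cup \{\wnu(G)\} = G \cup \{h\}$. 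Next, I would prove by induction on $i$ that $\phi^i(G \cup \{h\}) = \phi^i(G) \cup \{h\}$ whenever $\phi^i(G)$ is defined: the same path analysis shows that adjoining $\{h\}$ does not change $\mu$, and the conditions $h \notin \phi^j(G)$ and $h < \wnu(\phi^j(G))$ needed for this persist because each $\phi$ step strictly increases $\wnu$. Since $h \in H \subseteq M$, the containment equivalence $\phi^i(G) \cup \{h\} \subseteq M \Leftrightarrow \phi^i(G) \subseteq M$ then yields $\ind_M(H) = \ind_M(G) + 1$.

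For part~(2), the hypothesis $\walpha(G) < 0$ combined with $|G| \geq \lfloor n/2 \rfloor$ forces $n$ odd, $|G| = (n-1)/2$, $n \in G$, and $\walpha(G) = -1$; in particular $\rho_G$ is strictly negative on $\{1,\dots,n\}$. Moreover a direct computation of $\phi^2(G) = \phi(G \cup \{1\})$ shows that $\ind_M(G) \leq 1$ in case~(2), so the contextual requirement that $\ind_M(G)$ be even forces $\ind_M(G) = 0$, hence $1 \notin M$, hence $h \geq 2$. Under this restriction, repeating the part~(1) computation again yields $\phi(H) = G \cup \{h\}$. The decisive new point is that the iteration stops immediately: since $n \in \widetilde{N}(G)$, the maximum of $\rho_{G \cup \{h\}}$ over $(G \cup \{h\}) \cup \{0\}$ is attained at $g = n$, so $\mu(G \cup \{h\}) = n$ and $\phi(G \cup \{h\})$ is undefined. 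Hence $\ind_M(H) = 1$. I expect the main technical obstacle to be the careful lattice path case analysis needed both to place $\wnu(G)-1$ into $H$ and to verify that $\phi$ commutes with adjoining $\{h\}$ throughout the iteration; once the geometric picture is set up, the rest is routine partial sum bookkeeping.
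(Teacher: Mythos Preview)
Your argument for case~(1) is correct and follows the same route as the paper: both compute $\mu(H)=\wnu(G)-1$, hence $\phi(H)=H\cup\{\wnu(G)\}=G\cup\{h\}$, and then iterate to show that $\phi^{i+1}(H)$ is defined and lies in $M$ exactly when $\phi^{i}(G)$ does. You are in fact more careful than the paper in explicitly verifying that $\wnu(G)-1\in H$.

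Case~(2), however, has a genuine gap: you invoke ``the contextual requirement that $\ind_M(G)$ be even'' to force $1\notin M$ and hence $h\geq 2$. That requirement is \emph{not} a hypothesis of the lemma---it belongs to the later application in Proposition~\ref{sc}---so you are proving a strictly weaker statement than the one claimed. In fact the lemma as written is false without such a restriction: for $n=3$, $G=\{3\}$, $H=\{1\}$, $M=\{1,3\}$ one has $\walpha(G)=-1$ and all hypotheses are satisfied, yet $\phi(H)=\{1,2\}\not\subseteq M$, so $\ind_M(H)=0$, not $1$. The paper's own proof glosses over the same issue: its blanket claim that $\rho_G(g)<\walpha(G)$ for $g<\wnu(G)$, and the consequence $\walpha(H)=\walpha(G)+1$, both fail in case~(2), as the example above shows (there $\rho_G(1)=-1=\walpha(G)$ and $\walpha(H)=1$). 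The clean repair is to add the hypothesis $1\notin M$ (equivalently $\ind_M(G)=0$) to part~(2); with that extra assumption your argument goes through, and it is exactly what the application needs.
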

\begin{proof}
By assumption we have $$\max G\Delta H=\max\{h,\widetilde{\nu}(G)\}=\wnu(G),\quad \wnu(G)\in G,h\in H.$$
The definition of $\rho$ implies the following equation:
\begin{equation}\label{eq:gh}
\rho_{H}(g)=\begin{cases}
\rho_G(g) & g <h,\\
\rho_G(g)+2 & h\leq g<\wnu(G),\\
\rho_G(g) & g\geq\wnu(G).
\end{cases}
\end{equation}
Since $\rho_{G}(g)<\walpha(G)$ for $g<\wnu(G)$ it is $\rho_{H}(g)\leq\walpha(G)+1$ for $g<\wnu(G)$. Furthermore $\rho_{H}(g)=\rho_G(g)\leq\walpha(G)$ for all $g\geq\wnu(G)$. So overall $\walpha(H)\leq\walpha(G)+1$.

 On the other hand 
 \begin{equation}\label{eq:alpha_h}
\walpha(H)\geq\rho_{H}(\wnu(G)-1)=\rho_G(\wnu(G))+1>\rho_G(\wnu(G))=\walpha(G),
 \end{equation} 
by \eqref{eq:gh}. This shows that $\walpha(H)=\walpha(G)+1$. Furthermore \eqref{eq:gh} and \eqref{eq:alpha_h} yield that 
\begin{equation}\label{eq:rho_h}
\rho_{H}(\wnu(G)-1)=\widetilde{\alpha}(G)+1>\rho_{G}(g)=\rho_{H}(g),\quad\forall\,g\geq\wnu(G).\end{equation}
Next we show that $\walpha(H)\geq0$. This is obvious if $\walpha(G)\geq0$ by \eqref{eq:alpha_h}. If $\walpha(G)< 0$ then $|G|\geq\lfloor\frac{n}{2}\rfloor$ implies $|G|=|H|=\lfloor\frac{n}{2}\rfloor$. Note that this can only happen if $n$ is odd. Furthermore this implies that $$\rho_G(n)=\rho_H(n)=2|G|-n=-1$$ and hence $\walpha(G)=-1$. So in both cases $\walpha(H)\geq 0$ and thus \eqref{eq:rho_h} shows that:  
\begin{align}\label{eq:mu_h}
\mu(H)&=\wnu(G)-1,
\intertext{and thus} \label{eq:phi_h}
 \phi(H)&=H\cup\{\wnu(G)\}.
\end{align}
\begin{asparaenum}
\item Assume that $\walpha(G)\geq 0$ and thus $\walpha(H)>0$. We know by the definition of $\phi$ and \eqref{eq:gh} that 
\begin{equation}\label{eq:parallel}
\rho_{\phi(H)}(g)=\rho_{G}(g)+2,\quad\forall g\geq h.
\end{equation}
Since $\walpha(G)\geq0$ we have that $\mu(G)>0$ and thus $\mu(G)\geq \wnu(G)>h$ and $\mu(\phi(H))>\mu(H)=\wnu(G)-1\geq h$.
 Hence \eqref{eq:parallel} implies that $\mu(\phi(H))=\mu(G)$, as illustrated in Figure~\ref{fig:proof}. 

Note that \eqref{eq:parallel} stays valid if $\phi$ is applied to both $G$ and $\phi(H)$. Moreover $\mu(\phi^2(H))>\mu(\phi(H))$ and $\mu(\phi(G))>\mu(G)$. Hence $\mu(\phi^2(H))=\mu(\phi(G))$.
Continuing this argument we see that $\phi^i(G)$ is defined and $\phi^i(G)\subseteq M$ if and only if $\phi^i(\phi(H))$ is defined and is a subset of $M$. This shows that $$\ind_M(G)=\ind_M(\phi(H))=\ind_M(H)-1.$$
\begin{figure}[h]
\centering
\includegraphics{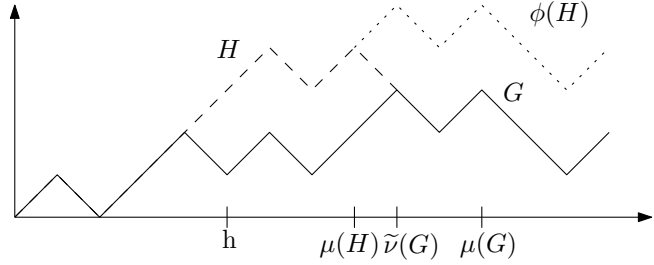}
\caption{Illustration of Lemma~\ref{lem:ind}}
\label{fig:proof}
\end{figure}
\item Since \eqref{eq:mu_h} holds in the case $\walpha(G)<0$ as well, we know that $\ind_M(H)\geq1$. Moreover $$\rho_{\phi(H)}(n)=1=\alpha(\phi(H)),$$ so $\mu(\phi(H))=n$ and thus $\ind_M(H)=1$.
\end{asparaenum}

\end{proof}
\begin{rem}
Note that in the case $\walpha(G)<0$ the index of $G$ in $M$ is $1$ or $0$ depending whether $1\in M$ or not. Hence the case distinction is necessary. 
\end{rem}
Now we can show that the squashed order works:
\begin{prop}\label{sc}
The family $\Gc(m)$ fulfills the $\triangle$-condition with respect to the squashed order for every multidegree $m$.
\end{prop}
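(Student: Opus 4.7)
The plan is to prove the proposition by contradiction, using Lemma~\ref{lem:ind} as the workhorse. Fix $M := \supp(m)$, order $\Gc(M)$ by the squashed order, and for each $G \in \Gc(M)$ nominate $T := \wpsi(G) = G\setminus\{\wnu(G)\}$ as the distinguished $(k-1)$-subset. What must be shown is that no $H \in \Gc(M)$ with $H \prec G$ satisfies $T \subseteq H$.

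Assume, toward a contradiction, that such an $H$ exists. Since $|T| = k-1$ and $|H| = k$, I can write $H = T \cup \{h\}$ with a unique $h \in M \setminus T$. If $h = \wnu(G)$ then $H = G$, which is excluded, so $h \notin G$ and therefore $G \Delta H = \{h,\wnu(G)\}$. The defining condition of the squashed order, $\max(G \Delta H) \in G$, then forces $h < \wnu(G)$. Combined with $k = |G| = |H| \geq \lfloor n/2 \rfloor$ and the automatic proper inclusion $\wpsi(G) \subset H$ (by cardinality), all hypotheses of Lemma~\ref{lem:ind} are now in place.

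Applying that lemma: in case (1), where $\walpha(G) \geq 0$, it yields $\ind_M(H) = \ind_M(G)+1$, which is odd because $\ind_M(G)$ is even by $G \in \Gc(M)$; in case (2), where $\walpha(G) < 0$, it yields $\ind_M(H) = 1$, again odd. Both outcomes contradict $H \in \Gc(M)$, whose elements have even index. Hence no such $H$ exists, and $T = \wpsi(G)$ genuinely distinguishes $G$ from its squashed-order predecessors in $\Gc(M)$.

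The main obstacle — and essentially the only nontrivial step — has already been absorbed into Lemma~\ref{lem:ind}, whose case analysis on the sign of $\walpha(G)$ is exactly what the proof needs. The residual work for the proposition is the short bookkeeping above, whose only delicate point is identifying $G\Delta H$ with $\{h,\wnu(G)\}$ so that the squashed-order comparison $H \prec G$ translates precisely into the hypothesis $G \succ H$ of the lemma. Once this alignment is observed, the $\triangle$-condition drops out immediately from the even-index membership criterion for $\Gc(M)$.
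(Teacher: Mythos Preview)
Your proof is correct and follows exactly the paper's approach: nominate $\wpsi(G)$ as the distinguished subset, and use Lemma~\ref{lem:ind} to conclude that any $H\prec G$ in $\Gc(M)$ containing $\wpsi(G)$ would have odd index, contradicting membership in $\Gc(M)$. The paper's own proof is a two-line invocation of the lemma; your version simply spells out the verification of its hypotheses (in particular the identification $G\Delta H=\{h,\wnu(G)\}$), which the paper leaves implicit.
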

\begin{proof}
Let $M=\supp(m)$. The set $\Gc(M)$ contains only $k$-subsets with even index.
Hence by Lemma~\ref{lem:ind}, $G\succ H$ implies that $\wpsi(G)\nsubseteq H$.
\end{proof}

\section*{Acknowledgment}
 The authors wish to thank Professor Bruns for suggesting and discussing the topic of this paper.

\bibliography{lit}
\bibliographystyle{alpha}

\end{document}